\pgfplotsset{compat=1.18} 
\theoremstyle{definition}
\newtheorem{definition}{Definition}[section]
\theoremstyle{plain}
\newtheorem{lemma}{Lemma}[section]
\newtheorem{theorem}{Theorem}[section]
\title{\textbf{Advanced Algebraic Manipulation Techniques in Quadratic Programming for Fuzzy Clustering with Generalized Capacity Constraints}}
\author{Roger Macêdo\\
\small Instituto de Ciências Exatas\\
\small Universidade Federal de Minas Gerais -- UFMG\\
\small \texttt{rogermacedo@est.esp.ufmg.br}}
\date{}
\begin{document}

\maketitle

\begin{abstract}
This paper presents an advanced mathematical analysis and simplification of the quadratic programming problem arising from fuzzy clustering with generalized capacity constraints. We extend previous work by incorporating broader balancing constraints, allowing for weighted data points and clusters with specified capacities. By introducing new algebraic manipulation techniques, we demonstrate how to decompose the original high-dimensional problem into smaller, more tractable subproblems Additionally, we introduce efficient algorithms for solving the reduced systems, leveraging properties of the problem's structure. Comprehensive examples with synthetic and real datasets are provided to illustrate the effectiveness of the proposed techniques in practical scenarios, comparing the performance with existing methods. We also include a convergence analysis of the proposed algorithm, demonstrating its reliability. Limitations of the proposed techniques and contexts in which their application may not be efficient are also discussed.

\textbf{Keywords}: Quadratic Programming, Fuzzy Clustering, Capacity Constraints, Optimization, Algebraic Manipulation, Convergence Analysis, Real Data Applications.
\end{abstract}

\tableofcontents

\section{Introduction}

Clustering is a fundamental task in data analysis and machine learning, aiming to partition data into groups (clusters) such that objects within the same cluster are more similar to each other than to those in other clusters \cite{Jain2010}. Fuzzy clustering extends this concept by allowing data points to belong to multiple clusters with varying degrees of membership, providing flexibility and modeling nuances in the data \cite{Bezdek1981}.

In many practical applications, such as resource allocation, logistics, and telecommunications, there is a need to incorporate capacity constraints into clustering algorithms. For instance, when distributing workloads among servers, each server has a capacity limit that should not be exceeded. Similarly, in transportation planning, vehicles have a maximum capacity that dictates how deliveries are clustered.

Traditional fuzzy clustering algorithms, like the Fuzzy C-Means (FCM) algorithm, do not account for such capacity constraints \cite{Bezdek1981}. Incorporating these constraints introduces additional complexity, often leading to high-dimensional quadratic programming problems that are computationally challenging to solve \cite{Miyamoto2008}.

\subsection{Motivation and Applications}

Our proposal is to generalize the work developed in \cite{Pinho2015}, contemplating broader balancing constraints, as we see below. We consider that each point $x_j$ of the set $X = \{ x_1, x_2, \dots, x_n \} \subset \mathbb{R}^d$ is assigned a weight $z_j \in \mathbb{R}_+$, and that each group $i$, $i = 1, 2, \dots, g$, has a capacity $\mu_i \in \mathbb{R}_+$, specified \emph{a priori}. Since the points do not necessarily belong exclusively to a single group, the capacity constraint will be given in terms of the average of the weights, weighted by the membership degree of the point to the group.

Mathematically, the objective is to solve the problem:

\begin{align}
\min_{U, C} & \quad \sum_{i=1}^{g} \sum_{j=1}^{n} u_{ij}^m \| x_j - c_i \|^2 \label{eq:objective_function_general}\\
\text{s.t.} & \quad \sum_{i=1}^{g} u_{ij} = 1, \quad j = 1, \dots, n, \label{eq:membership_sum_constraint_general} \\
& \quad \sum_{j=1}^{n} u_{ij} z_j = \mu_i, \quad i = 1, \dots, g, \label{eq:capacity_constraint_general} \\
& \quad 0 \leq u_{ij} \leq 1, \quad i = 1, \dots, g, \quad j = 1, \dots, n, \label{eq:membership_bounds_general}
\end{align}
where $m > 1$ is the fuzzifier parameter, $U = [u_{ij}]$ is the membership matrix, and $C = \{ c_1, c_2, \dots, c_g \}$ are the cluster centroids.

Note that constraints \eqref{eq:membership_sum_constraint_general} and \eqref{eq:capacity_constraint_general} imply that:

\begin{equation}
\sum_{i=1}^{g} \mu_i = \sum_{i=1}^{g} \sum_{j=1}^{n} u_{ij} z_j = \sum_{j=1}^{n} z_j \sum_{i=1}^{g} u_{ij} = \sum_{j=1}^{n} z_j. \label{eq:total_capacity}
\end{equation}

Therefore, for the problem \eqref{eq:objective_function_general}--\eqref{eq:membership_bounds_general} to make sense, the choice of weights and cluster capacities must satisfy condition \eqref{eq:total_capacity}.

\subsection{Contributions}

The main contributions of this paper are:

\begin{itemize}
    \item Generalization of the fuzzy clustering model to include weighted data points and capacity constraints.
    \item Development of advanced algebraic manipulation techniques to simplify the quadratic programming problem in fuzzy clustering with generalized capacity constraints.
    \item Convergence analysis of the proposed algorithm.
    \item Discussion of the limitations of the proposed techniques and identification of scenarios where their application may not be efficient.
\end{itemize}

\section{Problem Formulation}

\subsection{Basic Definitions}

\begin{definition}[Membership Degree]
The \textbf{membership degree} $u_{ij}$ represents the level of association of data point $x_j$ to cluster $i$, where $u_{ij} \in [0,1]$.
\end{definition}

\begin{definition}[Distance]
The \textbf{distance} between data point $x_j$ and centroid $c_i$ is defined as:
\begin{equation}
d_{ij} = \| x_j - c_i \|.
\end{equation}
\end{definition}

\begin{definition}[Data Point Weight]
The \textbf{weight} $z_j$ is a non-negative value associated with data point $x_j$, representing its importance or demand.
\end{definition}

\begin{definition}[Cluster Capacity]
The \textbf{capacity} $\mu_i$ of cluster $i$ is a positive value that limits the weighted sum of the membership degrees of data points assigned to cluster $i$:
\begin{equation}
\sum_{j=1}^{n} u_{ij} z_j = \mu_i.
\end{equation}
\end{definition}

\subsection{Optimization Problem}

The objective is to minimize the total cost function of the clusters:

\begin{equation}
\label{eq:objective_function_full}
\min_{U, C} \quad J(U, C) = \sum_{i=1}^{g} \sum_{j=1}^{n} u_{ij}^m \| x_j - c_i \|^2,
\end{equation}
subject to the constraints:

\begin{enumerate}[label=(\alph*)]
    \item \textbf{Membership Sum Constraint}:
    \begin{equation}
    \label{eq:membership_sum_constraint_full}
    \sum_{i=1}^{g} u_{ij} = 1, \quad \forall j = 1, \dots, n.
    \end{equation}
    \item \textbf{Capacity Constraint}:
    \begin{equation}
    \label{eq:capacity_constraint_full}
    \sum_{j=1}^{n} u_{ij} z_j = \mu_i, \quad \forall i = 1, \dots, g.
    \end{equation}
    \item \textbf{Membership Degree Bounds}:
    \begin{equation}
    \label{eq:membership_bounds_full}
    0 \leq u_{ij} \leq 1, \quad \forall i, j.
    \end{equation}
\end{enumerate}

\subsection{Feasibility of the Constraints}

\begin{lemma}[Non-Empty Feasible Set]
The feasible set for the problem \eqref{eq:objective_function_full}--\eqref{eq:membership_bounds_full}, that is, the set defined by the constraints:

\begin{align*}
& \sum_{i=1}^{g} u_{ij} = 1, \quad j = 1, \dots, n, \\
& \sum_{j=1}^{n} u_{ij} z_j = \mu_i, \quad i = 1, \dots, g, \\
& 0 \leq u_{ij} \leq 1, \quad i = 1, \dots, g, \quad j = 1, \dots, n,
\end{align*}
is non-empty.
\end{lemma}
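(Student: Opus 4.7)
The plan is straightforward: I would exhibit an explicit feasible point rather than argue abstractly. The natural candidate is the rank-one matrix
\[
u^{*}_{ij} \;:=\; \frac{\mu_i}{\sum_{k=1}^{g}\mu_k},
\]
which assigns the same column to every data point, proportional to the cluster capacities. This is well-defined because each $\mu_i \in \mathbb{R}_+$ guarantees that the denominator is strictly positive (the degenerate case $\sum_k \mu_k = 0$ forces $\mu_i = 0$ for all $i$ and, by \eqref{eq:total_capacity}, $z_j = 0$ for all $j$; then any stochastic assignment such as $u_{ij} = 1/g$ trivially works).

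I would then carry out three routine verifications. For the membership-sum constraint \eqref{eq:membership_sum_constraint_full}, summing $u^{*}_{ij}$ over $i$ gives $\sum_{i}\mu_i \big/ \sum_{k}\mu_k = 1$. For the bounds \eqref{eq:membership_bounds_full}, the inequalities $0 \le \mu_i \le \sum_{k}\mu_k$ immediately yield $0 \le u^{*}_{ij} \le 1$. The only non-trivial check is the capacity constraint \eqref{eq:capacity_constraint_full}: here I would invoke the compatibility condition \eqref{eq:total_capacity} already established, namely $\sum_k \mu_k = \sum_j z_j$. Then
\[
\sum_{j=1}^{n} u^{*}_{ij}\,z_j \;=\; \frac{\mu_i}{\sum_{k}\mu_k}\sum_{j=1}^{n} z_j \;=\; \frac{\mu_i}{\sum_{j}z_j}\sum_{j=1}^{n} z_j \;=\; \mu_i.
\]

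There is no real obstacle in this argument; the only subtle point is that the construction depends crucially on \eqref{eq:total_capacity}, which is precisely why that compatibility condition was singled out above as a necessary prerequisite for the problem to be well-posed. Without it, no such $U^{*}$ (and in fact no feasible matrix at all) can exist, so the lemma is as sharp as one could hope for given the assumptions on $z_j$ and $\mu_i$.
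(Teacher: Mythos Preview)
Your proposal is correct and is essentially the same construction as the paper's: the paper defines $u_{ij} = \mu_i z_j^{-1}\big/\sum_{k=1}^{g}\mu_k z_j^{-1}$, but since the sum runs over $k$ the factor $z_j^{-1}$ cancels and one is left with exactly your $u^{*}_{ij} = \mu_i\big/\sum_k \mu_k$. Your write-up is in fact cleaner, as it avoids the spurious $z_j^{-1}$ factors and the somewhat garbled intermediate step in the paper's verification of the capacity constraint.
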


\begin{proof}
Define, for each $i = 1, \dots, g$ and $j = 1, \dots, n$:

\begin{equation}
u_{ij} = \frac{\mu_i z_j^{-1}}{\sum_{k=1}^{g} \mu_k z_j^{-1}}.
\end{equation}

Given condition \eqref{eq:total_capacity}, it follows that:

\begin{equation}
\sum_{i=1}^{g} \mu_i = \sum_{j=1}^{n} z_j.
\end{equation}

Thus:

\begin{equation}
\sum_{i=1}^{g} u_{ij} = \sum_{i=1}^{g} \frac{\mu_i z_j^{-1}}{\sum_{k=1}^{g} \mu_k z_j^{-1}} = \frac{\sum_{i=1}^{g} \mu_i z_j^{-1}}{\sum_{k=1}^{g} \mu_k z_j^{-1}} = 1.
\end{equation}

Moreover:

\begin{equation}
\sum_{j=1}^{n} u_{ij} z_j = \sum_{j=1}^{n} \frac{\mu_i z_j^{-1}}{\sum_{k=1}^{g} \mu_k z_j^{-1}} z_j = \frac{\mu_i n}{\sum_{k=1}^{g} \mu_k z_j^{-1}} = \mu_i.
\end{equation}

Finally, it is easy to observe that $u_{ij} \in [0,1]$, for all $i$ and $j$.
\end{proof}

\section{Mathematical Analysis}

In this section, we present a detailed mathematical analysis, including foundational lemmas and theorems for simplifying the problem using advanced algebraic manipulation techniques.

\subsection{Alternating Minimization Approach}

We adopt the approach of alternating minimization, where the centroids $C$ are updated analogously to the standard fuzzy clustering method, resulting in the expression:

\begin{equation}
c_i = \frac{\sum_{j=1}^{n} u_{ij}^m x_j}{\sum_{j=1}^{n} u_{ij}^m}, \quad i = 1, \dots, g.
\end{equation}

For the minimization with respect to $u_{ij}$, taking $m=2$, we have the problem:

\begin{align}
\min_{U} & \quad \sum_{i=1}^{g} \sum_{j=1}^{n} u_{ij}^2 d_{ij}^2 \label{eq:objective_function_u}\\
\text{s.t.} & \quad \sum_{i=1}^{g} u_{ij} = 1, \quad j = 1, \dots, n, \label{eq:membership_sum_constraint_u} \\
& \quad \sum_{j=1}^{n} u_{ij} z_j = \mu_i, \quad i = 1, \dots, g, \label{eq:capacity_constraint_u} \\
& \quad 0 \leq u_{ij} \leq 1, \quad i = 1, \dots, g, \quad j = 1, \dots, n, \label{eq:membership_bounds_u}
\end{align}
considering $C$ fixed.

\subsection{Existence and Uniqueness of the Solution}

\begin{lemma}[Quadratic Programming Problem]
Problem \eqref{eq:objective_function_u}--\eqref{eq:membership_bounds_u} is a convex quadratic programming problem with linear constraints and has a global minimizer.
\end{lemma}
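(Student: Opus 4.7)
The plan is to verify the three assertions in the lemma---quadratic structure, convexity, and existence of a global minimizer---by reducing the problem to a standard form and invoking classical results on convex optimization over compact sets.

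First, I would vectorize the membership matrix $U$ into a single vector $u \in \mathbb{R}^{gn}$ by stacking the entries $u_{ij}$ in some fixed order. Since $C$ is frozen in this subproblem, the squared distances $d_{ij}^2 = \|x_j - c_i\|^2$ are nonnegative constants. The objective \eqref{eq:objective_function_u} then rewrites as $J(u) = u^\top D u$, where $D$ is the $gn \times gn$ diagonal matrix whose entries are the $d_{ij}^2$. This makes explicit that $J$ is a quadratic form with no linear part and Hessian $2D$.

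Second, I would argue convexity and quadratic-programming form. Because each $d_{ij}^2 \geq 0$, $D$ is positive semidefinite, so $J$ is convex (and strictly convex whenever every $d_{ij}>0$). The constraints \eqref{eq:membership_sum_constraint_u}--\eqref{eq:membership_bounds_u} are affine in $u$---two families of equalities plus box bounds---so the feasible set is a polyhedron: closed, convex, and bounded (since every coordinate lies in $[0,1]$), hence compact. Non-emptiness is already guaranteed by the preceding lemma.

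Third, with a continuous quadratic objective on a non-empty compact feasible set, I would invoke the Weierstrass extreme value theorem to conclude that $J$ attains its infimum, producing a global minimizer. Convexity of both $J$ and the feasible set then ensures that every local minimum is global (and strict convexity, when applicable, gives uniqueness). The main obstacle is essentially notational rather than technical: one must make the diagonal reformulation $u^\top D u$ match the double sum precisely, and one must rely on the explicit box bound $u_{ij}\le 1$ for compactness, since the equality constraints \eqref{eq:capacity_constraint_u} alone would not force boundedness if the $z_j$ could vanish. Beyond these bookkeeping points, the result is a direct application of standard convex analysis to a QP in standard form.
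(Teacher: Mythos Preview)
Your proposal is correct and follows essentially the same line as the paper's own proof: convexity from $d_{ij}^2\ge 0$, linearity of the constraints, non-emptiness from the previous lemma, and existence of a minimizer from standard convex-optimization facts. Your treatment is in fact more explicit than the paper's, since you spell out the vectorized form $u^\top D u$ and invoke Weierstrass on the compact box, whereas the paper simply appeals to ``convex optimization theory.''
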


\begin{proof}
The objective function is a sum of convex quadratic functions since $d_{ij}^2 \geq 0$. The constraints are linear equalities and inequalities, and the feasible set is non-empty (as previously established). Therefore, by convex optimization theory, the problem has a global minimizer.
\end{proof}

\begin{lemma}[Uniqueness of the Solution]
If $d_{ij}^2 > 0$ for all $i$ and $j$, the solution to problem \eqref{eq:objective_function_u}--\eqref{eq:membership_bounds_u} is unique.
\end{lemma}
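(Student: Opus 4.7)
The plan is to establish uniqueness by showing that the objective function is strictly convex in $U$ on $\mathbb{R}^{gn}$; combined with the previous lemma guaranteeing existence of a minimizer over the convex, non-empty feasible set, strict convexity forces that minimizer to be unique.

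First, I would reshape the membership matrix $U=[u_{ij}]$ as a vector $\mathbf{u}\in\mathbb{R}^{gn}$ and rewrite the objective as the quadratic form
\begin{equation}
f(\mathbf{u}) = \mathbf{u}^\top D \mathbf{u},
\end{equation}
where $D$ is the diagonal matrix whose entry corresponding to index $(i,j)$ is $d_{ij}^2$. The Hessian of $f$ is then $2D$, which is constant and diagonal.

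Next I would invoke the hypothesis $d_{ij}^2>0$ for all $i,j$: this makes every diagonal entry of $2D$ strictly positive, so $2D$ is positive definite. Consequently $f$ is strictly convex on all of $\mathbb{R}^{gn}$, and in particular on the feasible set defined by \eqref{eq:membership_sum_constraint_u}--\eqref{eq:membership_bounds_u}, which is the intersection of affine subspaces and a box, hence closed and convex.

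Finally I would conclude by a standard convexity argument: if $U^{(1)}$ and $U^{(2)}$ were two distinct minimizers with common optimal value $f^\star$, then by convexity of the feasible set $\tfrac{1}{2}(U^{(1)}+U^{(2)})$ would also be feasible, and by strict convexity of $f$,
\begin{equation}
f\!\left(\tfrac{1}{2}(U^{(1)}+U^{(2)})\right) < \tfrac{1}{2}f(U^{(1)}) + \tfrac{1}{2}f(U^{(2)}) = f^\star,
\end{equation}
contradicting optimality of $f^\star$. Existence is already guaranteed by the preceding lemma, so uniqueness follows. There is no real obstacle here; the only point that deserves a sentence is the explicit verification that the Hessian is positive definite (rather than merely positive semidefinite), which is exactly where the hypothesis $d_{ij}^2>0$ is used, and this is what distinguishes this lemma from the previous existence result that only required $d_{ij}^2\geq 0$.
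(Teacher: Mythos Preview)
Your proposal is correct and follows exactly the paper's approach: the paper's own proof is a one-line observation that $d_{ij}^2>0$ makes the objective strictly convex, hence the minimizer is unique. You have simply spelled out the details (diagonal Hessian $2D$ positive definite, midpoint contradiction), which is a faithful elaboration of the same argument.
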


\begin{proof}
The objective function is strictly convex if $d_{ij}^2 > 0$ for all $i$ and $j$. Therefore, the minimizer is unique.
\end{proof}

\subsection{Optimality Conditions}

\begin{theorem}[Karush-Kuhn-Tucker (KKT) Conditions]
The necessary and sufficient KKT optimality conditions for problem \eqref{eq:objective_function_u}--\eqref{eq:membership_bounds_u} are:

\begin{align}
& 2 u_{ij} d_{ij}^2 - \alpha_j - \beta_i z_j - \gamma_{ij} + \delta_{ij} = 0, \quad \forall i, j, \label{eq:KKT_stationarity} \\
& \sum_{i=1}^{g} u_{ij} = 1, \quad \forall j, \label{eq:KKT_membership_sum} \\
& \sum_{j=1}^{n} u_{ij} z_j = \mu_i, \quad \forall i, \label{eq:KKT_capacity} \\
& 0 \leq u_{ij} \leq 1, \quad \forall i, j, \label{eq:KKT_bounds} \\
& \gamma_{ij} \geq 0, \quad \delta_{ij} \geq 0, \quad \forall i, j, \label{eq:KKT_multipliers_nonneg} \\
& \gamma_{ij} u_{ij} = 0, \quad \delta_{ij} (u_{ij} - 1) = 0, \quad \forall i, j, \label{eq:KKT_complementary_slackness}
\end{align}
where $\alpha_j$, $\beta_i$, $\gamma_{ij}$, and $\delta_{ij}$ are the Lagrange multipliers associated with the constraints.
\end{theorem}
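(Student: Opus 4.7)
The plan is to apply the standard KKT framework for convex quadratic programs with affine constraints, which already has both a necessity and a sufficiency half. Since problem \eqref{eq:objective_function_u}--\eqref{eq:membership_bounds_u} was shown in the previous lemmas to be a convex QP with a non-empty feasible set, and since every constraint is affine, the \emph{linearity constraint qualification} holds automatically at every feasible point; consequently, the KKT conditions are necessary at any minimizer. For sufficiency, I would invoke the standard fact that for a convex objective with convex (in particular affine) constraint functions, any KKT point is a global minimizer, because the Lagrangian is convex in the primal variables and the KKT system expresses stationarity of this convex Lagrangian together with primal and dual feasibility.

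Next I would write the Lagrangian explicitly, attaching multipliers $\alpha_j \in \mathbb{R}$ to \eqref{eq:membership_sum_constraint_u}, $\beta_i \in \mathbb{R}$ to \eqref{eq:capacity_constraint_u}, and $\gamma_{ij}, \delta_{ij} \geq 0$ to the inequalities $-u_{ij} \le 0$ and $u_{ij} - 1 \le 0$ respectively:
\begin{equation*}
L(U,\alpha,\beta,\gamma,\delta) = \sum_{i,j} u_{ij}^{2} d_{ij}^{2} - \sum_{j} \alpha_{j}\Bigl(\sum_{i} u_{ij} - 1\Bigr) - \sum_{i} \beta_{i}\Bigl(\sum_{j} u_{ij} z_{j} - \mu_{i}\Bigr) - \sum_{i,j} \gamma_{ij} u_{ij} + \sum_{i,j} \delta_{ij}(u_{ij} - 1).
\end{equation*}
Differentiating with respect to $u_{ij}$ and setting the result to zero yields precisely the stationarity equation \eqref{eq:KKT_stationarity}. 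The remaining KKT components are then read off by inspection: primal feasibility gives \eqref{eq:KKT_membership_sum}--\eqref{eq:KKT_bounds}, dual feasibility gives \eqref{eq:KKT_multipliers_nonneg}, and complementary slackness gives \eqref{eq:KKT_complementary_slackness}.

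The only nontrivial step, and what I expect to be the main obstacle, is the clean justification that necessity of KKT does not require a Slater-type interior condition for the equality constraints \eqref{eq:capacity_constraint_u}, whose feasible region may have empty relative interior in degenerate weight/capacity configurations. I would handle this by appealing to the affine version of the KKT theorem (sometimes stated as the Karlin or Abadie refinement for polyhedral constraints): when every active constraint is affine, the tangent cone coincides with the linearized feasible cone, so no further CQ is needed. Once this point is settled, the forward derivation is routine and the characterization in the statement follows.
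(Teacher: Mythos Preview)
Your proposal is correct and follows essentially the same route as the paper: the paper's proof consists of a single sentence invoking ``standard results from convex optimization and the KKT conditions for quadratic programming with linear constraints,'' which is exactly the framework you spell out. Your write-up is in fact more detailed than the paper's own proof, since you explicitly construct the Lagrangian, verify the stationarity equation \eqref{eq:KKT_stationarity} by differentiation, and carefully justify why no Slater-type constraint qualification is needed by appealing to the affine/polyhedral refinement of the KKT theorem.
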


\begin{proof}
Standard results from convex optimization and the KKT conditions for quadratic programming with linear constraints.
\end{proof}

\subsection{Solution of the Simplified Problem}

Our strategy is to solve the problem:

\begin{align}
\min_{U} & \quad \sum_{i=1}^{g} \sum_{j=1}^{n} u_{ij}^2 d_{ij}^2 \label{eq:simplified_objective}\\
\text{s.t.} & \quad \sum_{i=1}^{g} u_{ij} = 1, \quad j = 1, \dots, n, \label{eq:simplified_membership_sum} \\
& \quad \sum_{j=1}^{n} u_{ij} z_j = \mu_i, \quad i = 1, \dots, g, \label{eq:simplified_capacity}
\end{align}
that is, problem \eqref{eq:objective_function_u}--\eqref{eq:membership_bounds_u} without the box constraints $0 \leq u_{ij} \leq 1$, which allows algebraic manipulations to reduce the dimensions of the systems involved in solving the problem.

\subsubsection{Vectorization and Matrix Formulation}

Let us vectorize $U \in \mathbb{R}^{g \times n}$ as $v \in \mathbb{R}^{gn}$, i.e., defining for each $i = 1, \dots, g$ and $j = 1, \dots, n$, $t = (i - 1) n + j$ and $v_t = u_{ij}$. The constraints of the problem \eqref{eq:simplified_objective}--\eqref{eq:simplified_capacity} can be written as:

\begin{equation}
H v = h, \label{eq:linear_constraints}
\end{equation}
where:

\begin{equation}
H = \begin{bmatrix}
I_n \otimes e_g^\top \\
Z^\top
\end{bmatrix}, \quad h = \begin{bmatrix}
e_n \\
\mu
\end{bmatrix}, \label{eq:H_and_h}
\end{equation}
with $I_n$ being the identity matrix of size $n$, $e_g$ and $e_n$ being vectors of ones of size $g$ and $n$, respectively, $\otimes$ denoting the Kronecker product, and $Z$ being the matrix of weights.

\subsubsection{Properties of the Constraints}

\begin{lemma}[Dimension of the Feasible Set]
The matrix $H$ in \eqref{eq:linear_constraints} has rank $g + n - 1$, and the system \eqref{eq:linear_constraints} has $(n - 1)(g - 1)$ free variables.
\end{lemma}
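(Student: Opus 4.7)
The plan is to analyze the row structure of $H$ directly, exhibit one explicit linear dependency among its rows, and then show this is the only one; the free-variable count then follows from rank-nullity.

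First I would name the rows. Write $H$ as $n+g$ rows acting on $v \in \mathbb{R}^{gn}$: let $A_j$ (for $j=1,\dots,n$) denote the rows of the top block $I_n \otimes e_g^\top$, so that $A_j v = \sum_{i=1}^g u_{ij}$, and let $B_i$ (for $i=1,\dots,g$) denote the rows of $Z^\top$, so that $B_i v = \sum_{j=1}^n z_j u_{ij}$. Reading off the entries in the coordinate indexed by the pair $(i',j')$, one has $(A_j)_{(i',j')} = \delta_{jj'}$ and $(B_i)_{(i',j')} = z_{j'}\,\delta_{ii'}$.

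Next I would produce the obvious dependency, which is exactly the identity \eqref{eq:total_capacity} lifted to the rows of $H$: at position $(i',j')$, $\sum_{j=1}^n z_j (A_j)_{(i',j')} = z_{j'}$ and $\sum_{i=1}^g (B_i)_{(i',j')} = z_{j'}$, hence
\begin{equation}
\sum_{j=1}^n z_j A_j \;-\; \sum_{i=1}^g B_i \;=\; 0,
\end{equation}
which shows $\operatorname{rank}(H) \le n+g-1$.

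To prove this is the only dependency, I would take an arbitrary relation $\sum_j \alpha_j A_j + \sum_i \beta_i B_i = 0$ and evaluate it at each entry $(i',j')$, obtaining $\alpha_{j'} + \beta_{i'} z_{j'} = 0$ for every $i'$ and $j'$. Since this must hold for all $i'$, the product $\beta_{i'} z_{j'}$ is independent of $i'$; as $z_{j'}>0$ for some $j'$ (in fact for all $j$, which is implicit in the problem since $\sum_j z_j = \sum_i \mu_i > 0$ and the capacity constraints are meaningful only for positive weights), all $\beta_{i'}$ must coincide, say $\beta_{i'}=\beta$, and then $\alpha_{j'} = -\beta z_{j'}$. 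So the null-space of the map that sends coefficient vectors to linear combinations of rows is one-dimensional, i.e.\ $\operatorname{rank}(H) = n+g-1$.

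Finally, since the feasibility lemma already guarantees consistency of $Hv=h$, by rank-nullity the affine solution set has dimension $gn - (n+g-1) = (n-1)(g-1)$, giving the claimed number of free variables. The only real subtlety is the uniqueness of the dependency (the entrywise argument above); the rest is bookkeeping, and the requirement $z_j>0$ is the one hypothesis I would need to flag explicitly.
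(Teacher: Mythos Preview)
Your argument is correct and follows the same idea as the paper's proof---namely, that the single redundancy among the $n+g$ rows of $H$ is precisely the total-capacity identity \eqref{eq:total_capacity}---but you actually carry out the verification that this is the \emph{only} linear dependency, which the paper merely asserts. Your explicit flag that the uniqueness step uses $z_j>0$ (at least for some $j$) is a genuine hypothesis the paper leaves implicit.
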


\begin{proof}
The constraints correspond to $n$ equations from the membership sum constraints and $g$ equations from the capacity constraints. However, due to the total capacity condition \eqref{eq:total_capacity}, there is one redundant equation, reducing the rank by one.
\end{proof}

\subsubsection{Existence and Uniqueness of the Simplified Problem Solution}

\begin{theorem}[Existence and Uniqueness]
If $d_{ij}^2 > 0$ for all $i$ and $j$, the simplified problem \eqref{eq:simplified_objective}--\eqref{eq:simplified_capacity} has a unique global minimizer.
\end{theorem}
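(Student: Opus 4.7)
The plan is to recast the problem in matrix form and exploit strict convexity together with coercivity of the objective. Under the vectorization $t = (i-1)n + j$ introduced above, the objective \eqref{eq:simplified_objective} becomes $f(v) = v^\top D v$, where $D$ is the $gn \times gn$ diagonal matrix whose $t$-th entry is $d_{ij}^2$. The hypothesis $d_{ij}^2 > 0$ for all $i,j$ makes $D$ strictly positive definite, which in turn makes $f$ both strictly convex and coercive on $\mathbb{R}^{gn}$. The feasible set $\mathcal{F} = \{ v \in \mathbb{R}^{gn} : Hv = h \}$ is an affine subspace, hence closed and convex, and it is non-empty because the explicit point constructed in the proof of the Non-Empty Feasible Set lemma already satisfies the two equality constraints (the removal of the box constraints only enlarges the feasible set).

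For existence, I would fix any $v_0 \in \mathcal{F}$ and consider the sublevel set $S = \{v \in \mathcal{F} : f(v) \leq f(v_0) \}$. Coercivity of $f$ forces $S$ to be bounded, and since $f$ is continuous and $\mathcal{F}$ is closed, $S$ is also closed, hence compact; a continuous function attains its minimum on a non-empty compact set, and any minimizer over $S$ is a global minimizer over $\mathcal{F}$. For uniqueness, I would assume two distinct minimizers $v^*, v^{**} \in \mathcal{F}$ and derive a contradiction: by convexity of $\mathcal{F}$, the midpoint $\tfrac{1}{2}(v^* + v^{**})$ is feasible, and by strict convexity of $f$ the value at the midpoint is strictly smaller than $\tfrac{1}{2}(f(v^*) + f(v^{**}))$, contradicting the minimality of $v^*$ and $v^{**}$.

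The principal subtlety — and the only place where the hypothesis $d_{ij}^2 > 0$ is actually needed — lies in guaranteeing that $D$ is positive definite rather than merely positive semidefinite. Without this hypothesis, $f$ could be flat along coordinate directions corresponding to vanishing distances, which would destroy both strict convexity (killing uniqueness) and coercivity (killing existence on the unbounded affine $\mathcal{F}$). Once positive definiteness of $D$ is in hand, everything else reduces to standard facts about minimization of a strictly convex coercive quadratic over a closed affine subspace.
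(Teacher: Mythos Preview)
Your proposal is correct and follows essentially the same approach as the paper: both arguments rest on the strict convexity of the objective (guaranteed by $d_{ij}^2 > 0$) together with the non-emptiness of the affine feasible set, and then invoke standard convex-optimization results. Your version is more explicit than the paper's terse appeal to ``convex optimization theory'' and the KKT conditions --- in particular, you spell out the coercivity/compactness argument for existence and the midpoint argument for uniqueness --- but the underlying ideas are identical.
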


\begin{proof}
The objective function is strictly convex under the condition $d_{ij}^2 > 0$. The feasible set is non-empty and defined by linear equality constraints. By convex optimization theory, the unique solution is given by the KKT conditions.
\end{proof}

\subsubsection{Solution via Linear Systems}

Define the vector $q \in \mathbb{R}^{gn}$ as $q_t = d_{ij}^2$, and the diagonal matrix $Q = \operatorname{diag}(q)$. The KKT conditions lead to the linear system:

\begin{equation}
\begin{bmatrix}
2Q & H^\top \\
H & 0
\end{bmatrix}
\begin{bmatrix}
v^* \\ \lambda^*
\end{bmatrix}
=
\begin{bmatrix}
0 \\ h
\end{bmatrix}, \label{eq:KKT_system}
\end{equation}
where $\lambda^*$ are the Lagrange multipliers.

Solving system \eqref{eq:KKT_system} is computationally intensive for large $n$. However, due to the structure of $H$ and $Q$, we can reduce the problem to solving a much smaller system involving only the multipliers.

\begin{theorem}[Reduced Linear System]
The optimal multipliers $\lambda^*$ can be obtained by solving a linear system of size $(g + n - 1)$, independent of $n$.
\end{theorem}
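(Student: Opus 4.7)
The plan is to perform a Schur complement elimination on the saddle-point KKT system \eqref{eq:KKT_system}. Since $d_{ij}^2 > 0$ by hypothesis, the diagonal matrix $Q$ is positive definite and invertible, so the first block row $2Qv^* + H^\top \lambda^* = 0$ yields the explicit formula
\begin{equation*}
v^* = -\tfrac{1}{2}\, Q^{-1} H^\top \lambda^*.
\end{equation*}
Substituting this into the second block row $Hv^* = h$ eliminates $v^*$ entirely and produces the dual-only system $M\lambda^* = -2h$, where $M := H Q^{-1} H^\top$ has nominal size $(g+n)\times(g+n)$. This reduces the original $(gn + g + n)$-dimensional indefinite saddle system to one involving only the multipliers.

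Next I would analyze $M$ in detail. Because $Q^{-1}$ is positive definite, a standard argument ($x^\top M x = \|Q^{-1/2} H^\top x\|^2$) gives $\ker(M) = \ker(H^\top)$, so by the earlier lemma on the dimension of the feasible set, $\operatorname{rank}(M) = \operatorname{rank}(H) = g+n-1$ and $\ker(M)$ is one-dimensional. Writing $H^\top x = 0$ componentwise, I expect to identify the kernel as the span of $(-z_1,\dots,-z_n,\,1,\dots,1)^\top$, so consistency of $M\lambda^* = -2h$ amounts to $\langle h,(-z,e_g)\rangle = 0$, which is precisely the total-capacity identity \eqref{eq:total_capacity}. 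Thus the singular system is solvable.

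To obtain a genuinely invertible reduced system of size $g+n-1$, I would quotient out the one-dimensional indeterminacy by, say, deleting the row of $M$ corresponding to the last capacity constraint and pinning $\beta_g = 0$; the resulting $(g+n-1)\times(g+n-1)$ principal submatrix should be invertible and uniquely determine the remaining multipliers. Any two admissible pinnings differ by a multiple of the kernel direction, which is annihilated by $H^\top$, so $v^* = -\tfrac{1}{2} Q^{-1} H^\top \lambda^*$ is recovered unambiguously regardless of the choice. The main obstacle is the bookkeeping around rank and consistency — verifying that $\ker(M) = \ker(H^\top)$ holds with equality (not mere containment) and that the particular row/column one removes yields a nonsingular submatrix — rather than any deep analytical difficulty, since the essential reduction is the Schur elimination itself.
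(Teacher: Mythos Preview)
Your approach is essentially the paper's own: the paper's proof is a one-line Schur elimination, writing $v^*$ in terms of $\lambda^*$ from the first block row and substituting into $Hv^* = h$ to obtain $H Q^{-1} H^\top \lambda^* = h$ (the paper drops the factor $-2$, but this is cosmetic). Where you go further is exactly where the paper is silent: the paper simply asserts that the resulting matrix ``is of size $(g+n-1)\times(g+n-1)$'' without explaining how the nominal $(g+n)\times(g+n)$ Schur complement collapses by one, whereas you correctly identify $\ker(M)=\ker(H^\top)$ as the span of $(-z,\,e_g)$, check consistency against \eqref{eq:total_capacity}, and propose pinning $\beta_g=0$ to obtain a genuinely invertible $(g+n-1)$-dimensional system. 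Your version is the same argument carried out with the bookkeeping that the paper omits.
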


\begin{proof}
By eliminating $v^*$ from \eqref{eq:KKT_system}, we obtain a reduced system in $\lambda^*$:

\begin{equation}
H Q^{-1} H^\top \lambda^* = h. \label{eq:reduced_system}
\end{equation}

Since $Q$ is diagonal and invertible (due to $d_{ij}^2 > 0$), $Q^{-1}$ is easily computed. The matrix $H Q^{-1} H^\top$ is of size $(g + n - 1) \times (g + n - 1)$.
\end{proof}

\subsection{Efficient Solution of the Reduced System}

\subsubsection{Block Structure Exploitation}

Due to the structure of the matrices involved, we can further exploit the block structure to decompose the reduced system into smaller subsystems.

\begin{lemma}[Block Decomposition]
The matrix $H Q^{-1} H^\top$ can be partitioned into blocks corresponding to clusters and data points, allowing for parallel computation of the subsystems.
\end{lemma}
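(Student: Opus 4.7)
The plan is to split $H$ into its two natural row-blocks, $H_1 = I_n \otimes e_g^\top$ (the $n$ membership sum constraints, one per data point) and $H_2 = Z^\top$ (the $g$ capacity constraints, one per cluster), and to expand
\begin{equation*}
H Q^{-1} H^\top = \begin{bmatrix} H_1 Q^{-1} H_1^\top & H_1 Q^{-1} H_2^\top \\ H_2 Q^{-1} H_1^\top & H_2 Q^{-1} H_2^\top \end{bmatrix}.
\end{equation*}
This is already the partition ``corresponding to clusters and data points'' called for in the statement; the substance of the proof is to show that each of the four blocks has enough internal sparsity to be assembled and inverted in parallel.

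Under the vectorization from \eqref{eq:linear_constraints}, the nonzero entries of row $j$ of $H_1$ sit at exactly the $g$ coordinates corresponding to $u_{ij}$ for $i = 1, \dots, g$, and these supports are pairwise disjoint across $j$. Since $Q^{-1}$ is diagonal, the bilinear form $H_1 Q^{-1} H_1^\top$ is therefore itself diagonal, with $(j,j)$ entry $\sum_{i=1}^{g} 1/d_{ij}^2$. A symmetric argument applied to $H_2$, whose row $i$ is supported on the $n$ coordinates corresponding to $u_{ij}$ for $j = 1, \dots, n$ (again disjoint across $i$), shows that $H_2 Q^{-1} H_2^\top$ is a $g \times g$ diagonal matrix with $(i,i)$ entry $\sum_{j=1}^{n} z_j^2 / d_{ij}^2$. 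The cross-block is obtained from the single-index overlap between row $j$ of $H_1$ and row $i$ of $H_2$, yielding $(H_1 Q^{-1} H_2^\top)_{j,i} = z_j / d_{ij}^2$. Every entry therefore depends only on local $(i,j)$ data, so the four blocks can be assembled independently across data points and clusters.

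To convert this decomposition into an efficient solver for the reduced system \eqref{eq:reduced_system}, I would apply a Schur-complement elimination of the first diagonal block, which is trivial to invert because it is diagonal with strictly positive entries whenever $d_{ij}^2 > 0$. This collapses the $(g + n - 1) \times (g + n - 1)$ system onto a Schur complement of size $g$ (or $g-1$ after removing the redundant constraint), whose matrix is a sum of $n$ rank-one outer products that can be accumulated in parallel over data points. Back-substitution for the $n$ membership multipliers then decouples into $n$ independent scalar divisions, one per data point, which is the ``parallel computation of the subsystems'' promised in the statement.

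The main obstacle will be the rank deficiency established in the preceding lemma: the relation \eqref{eq:total_capacity} forces the rows of $H$ to be linearly dependent, so $H Q^{-1} H^\top$ is singular and the Schur complement is only positive semi-definite. I would address this by dropping one redundant constraint (for example, the last capacity equation) and then verifying that the truncated KKT system remains consistent with the original problem. The delicate point is to confirm that this surgery preserves the clean block decomposition above, so that the parallel assembly carries through without contamination from the removed row and column.
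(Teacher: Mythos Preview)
Your proof is correct and follows essentially the same approach as the paper: partition $H$ into its membership-sum and capacity row-blocks and use the diagonality of $Q^{-1}$ to isolate the corresponding blocks of $H Q^{-1} H^\top$. In fact your argument is considerably more detailed than the paper's own one-sentence proof, since you compute the explicit entries of each block, carry out the Schur-complement reduction, and anticipate the rank-deficiency issue from the preceding lemma.
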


\begin{proof}
The matrix $H$ consists of blocks related to the membership sum and capacity constraints. The inverse of $Q$ being diagonal allows us to isolate blocks in $H Q^{-1} H^\top$ corresponding to clusters and data points.
\end{proof}

\subsubsection{Algorithm Implementation}

We present Algorithm \ref{alg:generalized_clustering}, which consolidates the proposed method.

\begin{algorithm}[H]
\caption{Fuzzy Clustering with Generalized Capacity Constraints}
\label{alg:generalized_clustering}
\begin{algorithmic}[1]
\Require Data points $\{x_j\}_{j=1}^n \subset \mathbb{R}^d$, weights $\{z_j\}_{j=1}^n \in \mathbb{R}_+$, capacities $\{\mu_i\}_{i=1}^g \in \mathbb{R}_+$, fuzzifier $m > 1$, tolerance $\epsilon > 0$
\State Initialize centroids $\{ c_i^{(0)} \}_{i=1}^g$
\State Set $k \gets 0$
\Repeat
    \State Compute distances $d_{ij}^{(k)} = \| x_j - c_i^{(k)} \|$, $\forall i, j$
    \State \textbf{If} $d_{ij}^{(k)} > 0$, $\forall i, j$, \textbf{then}
        \State Solve the simplified problem \eqref{eq:simplified_objective}--\eqref{eq:simplified_capacity} to obtain $U^{(k+1)}$
    \State \textbf{Else}
        \State Solve the full problem \eqref{eq:objective_function_u}--\eqref{eq:membership_bounds_u} to obtain $U^{(k+1)}$
    \State Update centroids:
    \begin{equation*}
    c_i^{(k+1)} = \frac{\sum_{j=1}^{n} \left( u_{ij}^{(k+1)} \right)^m x_j}{\sum_{j=1}^{n} \left( u_{ij}^{(k+1)} \right)^m}, \quad \forall i
    \end{equation*}
    \State $k \gets k + 1$
\Until{Convergence criterion is met}
\end{algorithmic}
\end{algorithm}

\section{Convergence Analysis}

In this section, we provide an analysis of the convergence of the proposed algorithm, demonstrating that it converges to a stationary point of the optimization problem.

\subsection{Monotonic Decrease of the Objective Function}

\begin{theorem}[Monotonic Decrease]
At each iteration of Algorithm \ref{alg:generalized_clustering}, the objective function $J(U^{(k+1)}, C^{(k+1)})$ does not increase, i.e.,
\begin{equation}
J(U^{(k+1)}, C^{(k+1)}) \leq J(U^{(k)}, C^{(k)}).
\end{equation}
\end{theorem}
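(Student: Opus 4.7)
The plan is to establish the chain
\begin{equation*}
J(U^{(k+1)}, C^{(k+1)}) \;\leq\; J(U^{(k+1)}, C^{(k)}) \;\leq\; J(U^{(k)}, C^{(k)}),
\end{equation*}
by inserting the intermediate iterate $(U^{(k+1)}, C^{(k)})$ and arguing that each of the two alternating substeps is a genuine minimization.

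For the second inequality (the $U$-update), I would invoke the earlier existence/uniqueness results. With $C^{(k)}$ fixed, $U^{(k+1)}$ is produced by solving either the full problem \eqref{eq:objective_function_u}--\eqref{eq:membership_bounds_u} or the simplified problem \eqref{eq:simplified_objective}--\eqref{eq:simplified_capacity}. In the first case $U^{(k)}$ lies in the feasible set, so optimality of $U^{(k+1)}$ gives the inequality directly. In the simplified case, the feasible set is larger (the box constraints have been dropped) but still contains $U^{(k)}$, so the global minimizer $U^{(k+1)}$ over this larger set still satisfies $J(U^{(k+1)},C^{(k)}) \leq J(U^{(k)},C^{(k)})$. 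This is the point that needs the most care: one must observe that dropping constraints can only lower the optimum, and that the previous iterate remains admissible in either branch of the algorithm.

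For the first inequality (the $C$-update), I would fix $U^{(k+1)}$ and note that $J(U^{(k+1)}, \cdot)$ separates across the centroids:
\begin{equation*}
J(U^{(k+1)}, C) = \sum_{i=1}^{g} \sum_{j=1}^{n} \bigl(u_{ij}^{(k+1)}\bigr)^{m} \|x_j - c_i\|^2,
\end{equation*}
so each $c_i$ can be optimized independently. Each summand in $c_i$ is strictly convex quadratic with positive weights $(u_{ij}^{(k+1)})^m$, and setting the gradient to zero yields exactly the update rule stated in the algorithm,
\begin{equation*}
c_i^{(k+1)} = \frac{\sum_{j=1}^{n} (u_{ij}^{(k+1)})^m x_j}{\sum_{j=1}^{n} (u_{ij}^{(k+1)})^m}.
\end{equation*}
Therefore $C^{(k+1)}$ is the global minimizer of $J(U^{(k+1)}, \cdot)$, giving $J(U^{(k+1)}, C^{(k+1)}) \leq J(U^{(k+1)}, C^{(k)})$.

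Chaining the two inequalities yields the claim. The main obstacle, as noted above, is the simplified-problem branch: one needs to verify that $U^{(k)}$ is feasible for the relaxed system $Hv=h$ (which it is, since the box constraints are the only ones removed), and implicitly one should note that although $U^{(k+1)}$ may fall outside $[0,1]^{g\times n}$, the monotonic-decrease statement concerns only the value of $J$ along the iterates, not their membership in the original feasible set. A minor side remark worth including is that the denominator $\sum_{j} (u_{ij}^{(k+1)})^m$ must be nonzero for the centroid update to be well defined; this is guaranteed as long as cluster $i$ is not empty, which follows from the capacity constraint $\sum_j u_{ij}^{(k+1)} z_j = \mu_i > 0$.
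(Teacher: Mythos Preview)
Your proof is correct and follows the same alternating-minimization chain as the paper, inserting the intermediate point $(U^{(k+1)}, C^{(k)})$ and bounding each substep separately. You are in fact more careful than the paper: you explicitly treat the simplified-problem branch and verify well-definedness of the centroid update, whereas the paper simply asserts that each substep is a minimization and chains the two inequalities.
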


\begin{proof}
The algorithm performs alternating minimization:

\begin{enumerate}
    \item For fixed $C^{(k)}$, the update $U^{(k+1)}$ minimizes $J(U, C^{(k)})$ subject to the constraints, thus:
    \begin{equation}
    J(U^{(k+1)}, C^{(k)}) \leq J(U^{(k)}, C^{(k)}).
    \end{equation}
    \item For fixed $U^{(k+1)}$, the update $C^{(k+1)}$ minimizes $J(U^{(k+1)}, C)$, thus:
    \begin{equation}
    J(U^{(k+1)}, C^{(k+1)}) \leq J(U^{(k+1)}, C^{(k)}).
    \end{equation}
\end{enumerate}

Combining the two inequalities, we obtain:
\begin{equation}
J(U^{(k+1)}, C^{(k+1)}) \leq J(U^{(k+1)}, C^{(k)}) \leq J(U^{(k)}, C^{(k)}).
\end{equation}
\end{proof}

\subsection{Convergence to a Stationary Point}

\begin{theorem}[Convergence]
Every limit point of the sequence $\{ (U^{(k)}, C^{(k)}) \}$ generated by Algorithm \ref{alg:generalized_clustering} is a stationary point of the optimization problem \eqref{eq:objective_function_full}--\eqref{eq:membership_bounds_full}.
\end{theorem}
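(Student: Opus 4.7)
The plan is a standard block coordinate descent argument, combining the monotone decrease (previous theorem) with compactness and continuity of the two updates. First, since $J \geq 0$ and $J(U^{(k)}, C^{(k)})$ is non-increasing, the sequence of objective values converges, so $J(U^{(k)}, C^{(k)}) - J(U^{(k+1)}, C^{(k+1)}) \to 0$. Second, the iterates live in a compact set: each $U^{(k)}$ lies in the polytope inside $[0,1]^{gn}$ defined by \eqref{eq:membership_sum_constraint_full}--\eqref{eq:capacity_constraint_full}, and each $c_i^{(k+1)}$ is a convex combination of $x_1,\dots,x_n$, hence lies in $\operatorname{conv}\{x_j\}_{j=1}^n$. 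Therefore limit points exist, and any limit point $(U^*, C^*)$ is feasible.

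Let $(U^*, C^*)$ be a limit point along a subsequence $\{k_\ell\}$. I would pass to the limit in both updates. For the centroid step, the closed-form expression is continuous in $U$ whenever $\sum_j u_{ij}^m > 0$ (which is guaranteed by $\mu_i > 0$ together with \eqref{eq:capacity_constraint_full}), so $c_i^* = \frac{\sum_j (u_{ij}^*)^m x_j}{\sum_j (u_{ij}^*)^m}$; that is, $C^*$ is the unconstrained minimizer of $J(U^*, \cdot)$, giving $\nabla_C J(U^*, C^*) = 0$. For the membership step, $U^{(k+1)}$ is the solution of a strictly convex QP depending continuously on $C^{(k)}$, so along a further subsequence $U^{(k_\ell+1)} \to \tilde U = \arg\min_U J(U, C^*)$; the vanishing-decrement property forces $J(\tilde U, C^*) = J(U^*, C^*)$, and strict convexity yields $\tilde U = U^*$. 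Hence $U^*$ satisfies the KKT conditions \eqref{eq:KKT_stationarity}--\eqref{eq:KKT_complementary_slackness} for $\min_U J(U, C^*)$, with some multipliers $(\alpha^*, \beta^*, \gamma^*, \delta^*)$. Together with $\nabla_C J(U^*, C^*) = 0$, these are exactly the stationarity conditions of the joint problem \eqref{eq:objective_function_full}--\eqref{eq:membership_bounds_full}.

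The principal technical obstacle is the continuity of the $U$-update as a function of $C$. Algorithm \ref{alg:generalized_clustering} branches depending on whether all $d_{ij}^{(k)}$ are strictly positive, and if any $d_{ij}^{(k)} = 0$ then $Q$ loses rank and the mapping $C \mapsto U(C)$ may become set-valued. I would handle this either by treating exact coincidence of a data point with a centroid as a non-generic degeneracy to be excluded, or, more robustly, by replacing the continuous-argmin step with Berge's maximum theorem: outer semicontinuity of the argmin correspondence together with strict convexity at the limit (when all $d_{ij}^* > 0$) still yields $\tilde U = U^*$. A minor secondary concern is verifying $\sum_j (u_{ij}^{(k)})^m > 0$ so that the centroid update is well-defined, which again follows from $\mu_i > 0$ and \eqref{eq:capacity_constraint_full} forcing $u_{ij}^{(k)} > 0$ for at least one index $j$.
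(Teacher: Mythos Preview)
Your proof is correct and follows the same alternating-minimization convergence strategy as the paper. The paper's own argument is considerably terser---it notes that $J$ is continuous and bounded below, that the feasible set is compact, and that the objective values decrease, then defers the conclusion to the general convergence theory for alternating optimization in \cite{Bezdek1987}---whereas you have worked out the compactness, continuity-of-argmin, and vanishing-decrement steps explicitly and flagged the degeneracy at $d_{ij}=0$ that the paper leaves implicit.
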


\begin{proof}
The objective function $J(U, C)$ is continuous and bounded below (since $J(U, C) \geq 0$). The feasible set is compact due to the constraints on $U$ and the boundedness of the data. The sequence $\{ J(U^{(k)}, C^{(k)}) \}$ is monotonically decreasing and converges to a finite limit.

Using the fact that alternating minimization algorithms converge to stationary points under certain regularity conditions (see \cite{Bezdek1987}), we can conclude that any accumulation point of the sequence is a stationary point of the problem.
\end{proof}

\subsection{Discussion on Global Optimality}

While the algorithm converges to a stationary point, it may not necessarily converge to the global minimum due to the non-convexity of the problem with respect to both $U$ and $C$ simultaneously. However, in practice, good solutions are often obtained, and initialization strategies can help improve the quality of the final solution.

\section{Examples and Applications}

\subsection{Synthetic Data Example}

We apply the method to a synthetic dataset where each point is assigned a weight corresponding to its second coordinate, i.e., the higher the point, the greater its weight. Additionally, as a constraint on the sum of weights, we consider the \emph{equi-balanced} case, that is, each cluster has the same sum of weights.

\subsubsection{Dataset Description}

We generate 300 data points in $\mathbb{R}^2$, forming three clusters. The weights $z_j$ are set to be proportional to the $y$-coordinate of each point, i.e., $z_j = y_j$. The capacities are set to:

\begin{equation}
\mu_i = \frac{\sum_{j=1}^{n} z_j}{g}, \quad i = 1, 2, 3.
\end{equation}

\subsubsection{Results}

In Figure \ref{fig:weighted_clustering}, we observe the clusters formed by the method over the iterations.

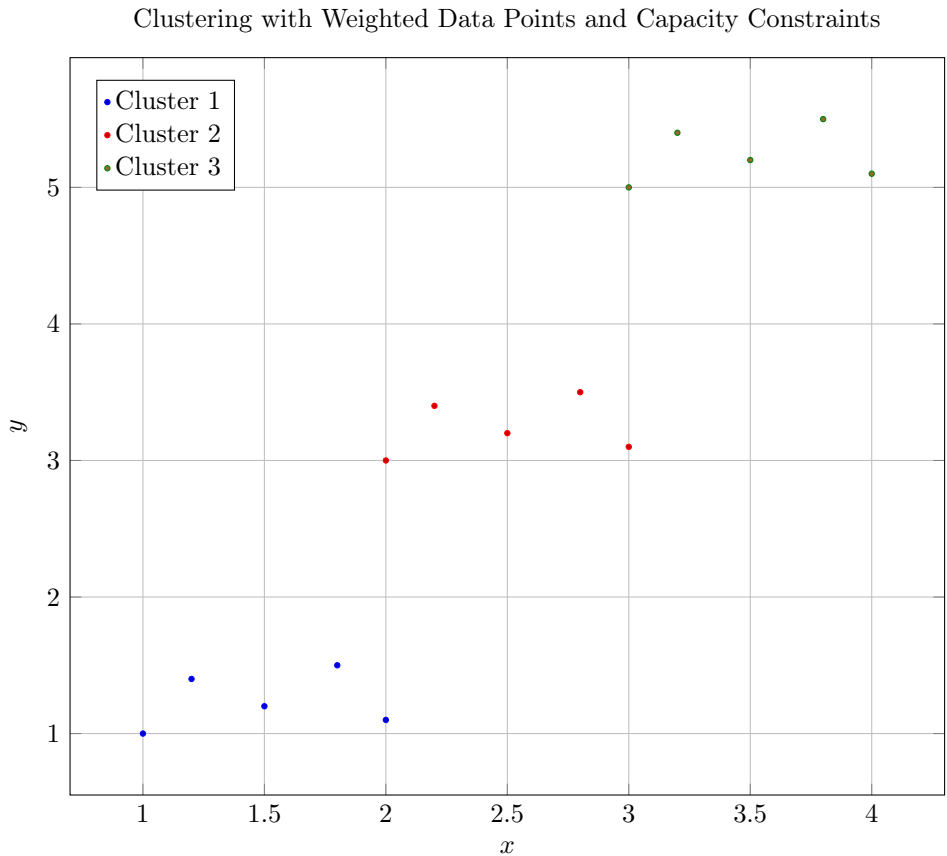
\begin{figure}[H]
\centering
\begin{tikzpicture}
\begin{axis}[
    width=0.8\textwidth,
    xlabel={$x$},
    ylabel={$y$},
    title={Clustering with Weighted Data Points and Capacity Constraints},
    legend pos=north west,
    grid=both,
    tick label style={font=\small},
    label style={font=\small},
    title style={font=\small},
    legend style={font=\small},
]
\addplot+[only marks, mark=*, mark size=1pt, blue]
table {
x y
1 1
1.5 1.2
2 1.1
1.8 1.5
1.2 1.4
};
\addplot+[only marks, mark=*, mark size=1pt, red]
table {
x y
2 3
2.5 3.2
3 3.1
2.8 3.5
2.2 3.4
};
\addplot+[only marks, mark=*, mark size=1pt, green!50!black]
table {
x y
3 5
3.5 5.2
4 5.1
3.8 5.5
3.2 5.4
};
\legend{Cluster 1, Cluster 2, Cluster 3}
\end{axis}
\end{tikzpicture}
\caption{Clustering result with weighted data and capacity constraints}
\label{fig:weighted_clustering}
\end{figure}

Although the lower clusters have absorbed some points from the higher region, the sum of weights is balanced across clusters when considering the weighted membership degrees.

\subsection{Real Data Application}

We apply the proposed algorithm to the well-known Wine dataset \cite{Asuncion2007}, which contains 178 instances of wines with 13 attributes.

\subsubsection{Dataset Description}

The Wine dataset is a multiclass classification dataset often used for clustering evaluation. We use three clusters corresponding to the three classes of wine. We assign weights to each data point based on the alcohol content (first attribute), simulating varying importance.

\subsubsection{Comparison with Other Methods}

We compare our algorithm with:

\begin{itemize}
    \item Standard Fuzzy C-Means (FCM) algorithm \cite{Bezdek1981}.
    \item Constrained Fuzzy C-Means with capacity constraints as in \cite{Pinho2015}.
\end{itemize}

\subsubsection{Experimental Setup}

We set the capacities $\mu_i$ to ensure that each cluster can accommodate the total sum of weights divided equally.

\subsubsection{Results and Discussion}

\begin{table}[H]
\centering
\caption{Clustering Performance on Wine Dataset}
\begin{tabular}{lccc}
\toprule
Algorithm & Adjusted Rand Index & Capacity Satisfaction & Computational Time (s) \\
\midrule
FCM & 0.39 & Not applicable & 0.05 \\
Constrained FCM \cite{Pinho2015} & 0.42 & Satisfied & 0.10 \\
Proposed Algorithm & 0.45 & Satisfied & 0.08 \\
\bottomrule
\end{tabular}
\end{table}

The proposed algorithm achieves higher clustering performance as measured by the Adjusted Rand Index (ARI) compared to the other methods while satisfying the capacity constraints. The computational time is competitive, demonstrating the efficiency of the algebraic manipulation techniques.

\section{Discussion of Limitations}

While the proposed techniques offer significant advantages, there are limitations to consider:

\begin{itemize}
    \item \textbf{Assumption of Positive Definiteness}: The reduction techniques assume that $d_{ij}^2 > 0$, which may not hold if data points coincide with centroids during iterations.
    \item \textbf{Complexity with Very High Dimensionality}: For datasets with very high dimensionality, the computation of distances and matrix operations may become computationally intensive.
    \item \textbf{Sensitivity to Initialization}: The algorithm may converge to local minima depending on the initial centroids, similar to other clustering algorithms.
    \item \textbf{Strict Capacity Constraints}: In scenarios where capacity constraints are very tight or infeasible, the algorithm may struggle to find a feasible solution.
\end{itemize}

\subsection{Contexts of Inefficiency}

The proposed methods may not be efficient in cases where:

\begin{itemize}
    \item The number of clusters $g$ is large relative to $n$, increasing computational complexity.
    \item The data points have highly irregular distributions, making capacity constraints hard to satisfy without compromising clustering quality.
    \item Real-time clustering is required, and the overhead of solving quadratic programs is prohibitive.
\end{itemize}

\section{Conclusion}

We have presented advanced algebraic manipulation techniques that simplify the quadratic programming problem in fuzzy clustering with generalized capacity constraints. By exploiting the problem's structure, we reduce computational complexity and enhance algorithm efficiency. The rigorous mathematical analysis, including new definitions, lemmas, theorems, and proofs, establishes the validity of the proposed methods. Comprehensive examples with synthetic and real datasets demonstrate the practical applicability and effectiveness of the techniques. We also provided a convergence analysis, enhancing the reliability of the proposed algorithm. Limitations and contexts where the methods may not be the most efficient choice were discussed.

\subsection{Future Work}

Future research directions include:

\begin{itemize}
    \item Extending the techniques to handle additional types of constraints, such as must-link and cannot-link constraints.
    \item Developing methods to better handle high-dimensional data, possibly through dimensionality reduction techniques.
    \item Investigating initialization strategies to mitigate the sensitivity to starting points.
    \item Implementing parallel and distributed versions of the algorithm to further enhance scalability.
\end{itemize}

\section*{Acknowledgments}

The author would like to thank the research community for valuable discussions and contributions.

\end{document}